\documentclass[12pt,twoside]{amsart}
\usepackage{amssymb,amsmath,amsthm, amscd, enumerate, mathrsfs, upgreek}
\usepackage{graphicx, hhline, tikz}
\usepackage[colorlinks=true,pagebackref,hyperindex]{hyperref}
\usepackage[all]{xy}
\usepackage{color}
\usepackage[backrefs, alphabetic, initials]{amsrefs}
\usepackage{appendix}
\usepackage{color} 
\usepackage{latexsym}
\usepackage[T1]{fontenc} 
\usepackage{fancyhdr}
\usepackage{enumerate}
\usepackage{tikz-cd}
\usepackage{array, longtable}
\usepackage{caption}
\usepackage{mathtools}
\usepackage{float}
\usepackage[thicklines]{cancel}
\newcolumntype{C}{>{$}c<{$}}
\newcolumntype{L}{>{$}l<{$}}

\title[Degrees of canonical Fano threefolds]{Degrees of non-Gorenstein canonical Fano threefolds with Picard number one}

\date{\today}
\subjclass[2020]{Primary 14J45; Secondary 14J30, 14E30, 14J10}
\keywords{Fano indices, canonical Fano threefolds}
\author{Minyou Li}
\address{Sun Yat-Sen University, School of Mathematics, Guangzhou 510275, China}
\email{\href{limy236@mail2.sysu.edu.cn}{limy236@mail2.sysu.edu.cn}}


\DeclareMathOperator{\Sing}{Sing}

\DeclareMathOperator{\Cl}{Cl}

\newcommand{\qW}{\text{\rm q}_{\text{\rm W}}}
\newcommand{\qQ}{\text{\rm q}_{\mathbb{Q}}}

\newcommand\lcm{{\text{l.c.m.}}}


\newtheorem{thm}{Theorem}[section]
\newtheorem{lem}[thm]{Lemma}
\newtheorem{prop}[thm]{Proposition}

\newtheorem{cor}[thm]{Corollary}

\theoremstyle{definition}
\newtheorem{ex}[thm]{Example}
\newtheorem{defn}[thm]{Definition}
\newtheorem{rem}[thm]{Remark}

\newtheorem{case}{Case}
\newtheorem{subcase}{Subcase}
\numberwithin{subcase}{case}


\makeatletter
 
 \@addtoreset{equation}{section}
\makeatother

\setlength{\topmargin}{-1cm}
\setlength{\oddsidemargin }{-1pt}
\setlength{\evensidemargin }{-1pt}
\setlength{\textwidth}{460pt}
\setlength{\textheight}{24cm}
\setcounter{tocdepth}{1}
\begin{document}

\begin{abstract}
   We show that the optimal upper bound for the anticanonical degrees of non-Gorenstein $\mathbb{Q}$-factorial canonical Fano threefolds with Picard number one is 200/3.
\end{abstract}

\maketitle 
\tableofcontents

\section{Introduction}\label{sec1}

In this paper, we work over the field $\mathbb{C}$ and use basic notation from \cite{km98}. We assume that all varieties appearing in this paper are projective and normal. A variety $X$ is called a \textit{Fano variety} if its first Chern class $c_1(X)$ is ample. A variety $X$ is said to have \textit{terminal} (resp. \textit{canonical}, \textit{klt}, \textit{lc}) singularities if the discrepancy $a(E,X)>0$ (resp. $a(E,X)\ge 0$, $a(E,X)>-1$, $a(E,X)\ge -1$) for every exceptional divisor $E$ over $X$. An exceptional divisor $E$ over $X$ is called a \textit{crepant divisor} if $a(E,X)=0$. A variety $X$ is called \textit{Gorenstein} if the Weil divisor $K_X$ is Cartier. A variety $X$ is \textit{$\mathbb{Q}$-factorial} if for every Weil divisor $D$ on $X$, there exists a positive integer $r$ such that $rD$ is Cartier on $X$. All the singularities above appear naturally in the Minimal Model Program (MMP for short). 

The \textit{(anticanonical) degree} $c_1(X)^3=(-K_X)^3$ plays an important role in the study of a Fano threefold $X$. Hence, computing the optimal upper bound for anticanonical degrees under different conditions is vital. 

 Prokhorov \cite{pr05} showed that for a Gorenstein canonical Fano threefold $X$, $c_1(X)^3\le 72$, and the equality holds if and only if $X\cong \mathbb{P}(1,1,1,3)$ or $\mathbb{P}(1,1,4,6)$. He \cite{pr07} also showed that for a non-Gorenstein $\mathbb{Q}$-factorial terminal Fano threefold $X$ with Picard number one, $c_1(X)^3\le \frac{125}{2}$, and the equality holds if and only if $X\cong \mathbb{P}(1,1,1,2)$. Later, Chen Jiang, Haidong Liu, and Jie Liu \cite{jll25} considered the invariants $\qW$ and $\qQ$ (See Definition \ref{q}) and showed that for a $\mathbb{Q}$-factorial canonical Fano threefold $X$ with Picard number one, $c_1(X)^3\le 72$, and the equality holds if and only if $X\cong \mathbb{P}(1,1,1,3)$ or $\mathbb{P}(1,1,4,6)$. Note that $\mathbb{P}(1,1,1,3)$ and $\mathbb{P}(1,1,4,6)$ are both Gorenstein. Based on \cite{jll25}, Chen Jiang, Tianqi Zhang, and Yu Zou \cite{jzz25} recently showed that for a canonical weak Fano threefold $X$, $c_1(X)^3\le 72$.

In this paper, we obtain the optimal upper bound for anticanonical degrees of non-Gorenstein $\mathbb{Q}$-factorial canonical Fano threefolds with Picard number one. Here is the main theorem of this paper.

\begin{thm}[Corollary \ref{cor}]
    For a non-Gorenstein $\mathbb{Q}$-factorial canonical Fano threefold $X$ with Picard number one, we have $c_1(X)^3\le \frac{200}{3}$. Moreover, if the equality holds, we have $\qQ(X)=\qW(X)\in \{2,4,10\}$. Suppose $c_1(X)^3=200/3$,
    
    (1) if $\qQ(X)=2$, then either $\Sing(X)$ contains exactly one curve $C_2$ which is of type $A_1$ with $-K_X\cdot C_2=\frac{1}{3}$ or $\Sing(X)$ is of codimension 3;

    (2) if $\qQ(X)=4$, then $\Sing(X)$ has exactly one curve $C_2$ which is of type $A_1$ with $-K_X\cdot C_2=\frac{1}{3}$;

    (3) if $\qQ(X)=10$ and if there exists a curve $C\subset \Sing(X)$, then $C$ is of type $A_n$ for $n\in \{1,2,3,4,5\}$.
\end{thm}

The following example implies that the upper bound $\frac{200}{3}$ can be achieved.
\begin{ex}
    There exists a toric $\mathbb{Q}$-factorial canonical Fano threefold $X\cong \mathbb{P}(1,1,3,5)$ with Picard number one, such that $c_1(X)^3=\frac{200}{3}$ and $\qW(X)=10$. We see that the two singular points on $X$ are of the form $\frac{1}{3}(1,-1,1)$ and $\frac{1}{5}(1,1,-2)$ respectively, where the former one is terminal and the latter one is canonical by \cite{ms84}*{Theorem 2.3}.
\end{ex}

\section{Preliminaries}

Here are the definitions of some invariants considered in this paper.
\begin{defn}\label{q}
    Let $X$ be a canonical Fano variety. We can define two kinds of \textit{Fano indices} by
    $$\qW(X):=\max\{q|-K_X\sim qB,\qquad B\in \Cl(X)\};$$
    $$\qQ(X):=\max\{q|-K_X\sim_{\mathbb{Q}} qB,\qquad B\in \Cl(X)\}.$$
\end{defn}

\begin{defn}

Let $X$ be a canonical threefold. According to \cite{reid87}*{(10.2)(3)}, there exists a basket of points
$$B_X=\{(r_i,b_i)\in \mathbb{Z}^2\ |\ 1\le i\le k;\ 1<b_i\le \frac{r_i}{2};\ \text{g.c.d.}(b_i,r_i)=1\}$$
for $X$, where a pair $(r_i,b_i)$ corresponds to an orbifold point $Q_i$ of type $\frac{1}{r_i}(1,-1,b_i)$. We call it \textit{Reid's basket} (basket for short).

Denote by $\mathcal{R}_X$ the collection of $r_i$ appearing in $B_X$. Let $r_X$ be the smallest positive integer such that $r_XK_X$ is Cartier and we call it the \textit{Gorenstein index}. Then 
$$r_X=\lcm \{r_i|r_i\in \mathcal{R}_X\}.$$
    
\end{defn}

\begin{thm}[\cite{reid87}*{(10.3)}, \cite{jll25}*{Theorem 2.1}]\label{rr}
    Let $X$ be a canonical Fano threefold and $B_X$ the basket for $X$. Then
    \begin{equation}\label{e1}
        c_2(X)\cdot c_1(X)+\sum_{r_i\in \mathcal{R}_X}(r_i-\frac{1}{r_i})=24;
    \end{equation}
    \begin{equation}\label{e2}
        \frac{1}{2}c_1(X)^3+3-\sum_{(r_i,b_i)\in B_X}\frac{b_i(r_i-b_i)}{2r_i}=h^0(X,-K_X)\in \mathbb{Z}_{\ge 0}.
    \end{equation}
\end{thm}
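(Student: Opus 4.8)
The plan is to derive both identities as two specializations of a single master formula, namely Reid's singular Riemann--Roch for threefolds with canonical singularities, and then to use vanishing theorems to replace the Euler characteristics by spaces of global sections. Concretely, for a Weil divisor $D$ on $X$ the formula (resting on the existence of the basket in \cite{reid87}*{(10.2)(3)}) reads
\begin{equation*}
\chi(\mathcal{O}_X(D)) = \chi(\mathcal{O}_X) + \tfrac{1}{12}D\cdot(D-K_X)\cdot(2D-K_X) + \tfrac{1}{12}D\cdot c_2(X) + \sum_{Q\in B_X} c_Q(D),
\end{equation*}
where $c_Q(D)$ is a local correction at the orbifold point $Q$ of type $\tfrac{1}{r_Q}(1,-1,b_Q)$, depending only on $r_Q$, $b_Q$ and the residue of $D$ in the local class group $\mathbb{Z}/r_Q$. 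The polynomial part is exactly Hirzebruch--Riemann--Roch, so all the arithmetic content sits in the local terms $c_Q$.

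For the first identity I would take $D=0$. Since $X$ is Fano with rational (canonical) singularities, Kawamata--Viehweg vanishing gives $H^i(X,\mathcal{O}_X)=0$ for $i>0$, whence $\chi(\mathcal{O}_X)=1$. The polynomial part vanishes, and the local contribution at a point of index $r_Q$ is $c_Q(0)=\tfrac{1}{24}(r_Q-\tfrac{1}{r_Q})$, independent of $b_Q$; this is exactly why $b_i$ does not appear in (2.1). Substituting into the master formula and multiplying $\chi(\mathcal{O}_X)=\tfrac{1}{24}c_1(X)\cdot c_2(X)+\tfrac{1}{24}\sum_i(r_i-\tfrac{1}{r_i})=1$ by $24$ yields equation (2.1).

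For the second identity I would take $D=-K_X$, which is ample. Writing $-K_X=K_X+(-2K_X)$ with $-2K_X$ nef and big, Kawamata--Viehweg vanishing gives $H^i(X,-K_X)=0$ for $i>0$, so $\chi(\mathcal{O}_X(-K_X))=h^0(X,-K_X)\in\mathbb{Z}_{\ge 0}$. A direct expansion of the polynomial part at $D=-K_X$ produces $\tfrac{1}{12}(-K_X)\cdot(-2K_X)\cdot(-3K_X)=\tfrac{1}{2}c_1(X)^3$, together with $\tfrac{1}{12}(-K_X)\cdot c_2(X)$. Feeding the value of $(-K_X)\cdot c_2(X)$ from (2.1) into this $c_2$-term and combining with $\chi(\mathcal{O}_X)=1$ collapses everything into the constant $3$ plus a residual sum over the basket; the final step is to verify that the local terms reduce to $c_Q(-K_X)=\tfrac{1}{12}(r_i-\tfrac{1}{r_i})-\tfrac{b_i(r_i-b_i)}{2r_i}$, so that after the $\tfrac{1}{12}(r_i-\tfrac{1}{r_i})$ pieces cancel, only $-\sum_i\tfrac{b_i(r_i-b_i)}{2r_i}$ survives (note that this expression is symmetric under $b_i\mapsto r_i-b_i$, so the convention for the local character of $-K_X$ is immaterial).

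The main obstacle is precisely the evaluation of the local Riemann--Roch contributions $c_Q(D)$: these are Dedekind-sum type quantities whose simplification yields the clean, sum-free expressions $\tfrac{1}{24}(r-\tfrac{1}{r})$ and $\tfrac{b(r-b)}{2r}$ in the statement. One must also justify that the (possibly complicated) canonical singularities of $X$ contribute to $\chi$ exactly as the fictitious terminal cyclic quotient points of the basket $B_X$ do; this is the content of Reid's structure result \cite{reid87}*{(10.2)(3),(10.3)}, which I would invoke rather than reprove. Everything else—the two vanishing statements and the polynomial bookkeeping—is routine.
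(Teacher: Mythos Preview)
Your approach is correct and is precisely the standard derivation: specialize Reid's singular Riemann--Roch for canonical threefolds \cite{reid87}*{(10.2)--(10.3)} at $D=0$ and $D=-K_X$, and use Kawamata--Viehweg vanishing (valid since canonical singularities are rational and $-K_X$ is ample) to convert $\chi$ into $h^0$. The polynomial bookkeeping and the identification of the local contributions $c_Q(0)=\tfrac{1}{24}(r-\tfrac{1}{r})$ and $c_Q(-K_X)=\tfrac{1}{12}(r-\tfrac{1}{r})-\tfrac{b(r-b)}{2r}$ are exactly what one finds in Reid's paper.

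There is nothing to compare against here: the paper does not supply its own proof of this theorem. It is stated as a citation of \cite{reid87}*{(10.3)} and \cite{jll25}*{Theorem 2.1} and used as a black box throughout Sections~3 and~4. Your sketch is a faithful reconstruction of the original argument behind those references; the only ``obstacle'' you flag---evaluating the Dedekind-sum type local terms and reducing the canonical basket to terminal cyclic quotient contributions---is precisely what Reid's structure theorem \cite{reid87}*{(10.2)(3)} handles, and you are right to invoke it rather than reprove it.
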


\begin{proof}
    Direct corollary of \cite{jll25}*{Theorem 3.8} and \cite{jll25}*{Theorem 4.6}.
\end{proof}

Here is one of the main tools of this paper, which expresses relationships between some invariants of a canonical Fano threefold.

\begin{thm}[\cite{jll25}*{Theorem 4.2}]\label{in}
    Let X be a canonical Fano threefold. Let $r_X$ be the Gorenstein index of $X$. Let A be a Weil divisor such that $-K_X\equiv \qQ(X)A$. Take $J_A$ to be the smallest positive integer such that $J_AA$ is Cartier in codimension 2. Then 

    (1)\ $J_Ar_X(-K_X)^3/\qQ(X)^2$ is a positive integer;

    (2)\ $\qW(X)\mid r_X(-K_X)^3$;

    (3)\ If $\qQ(X)=\qW(X)$, $J_A\mid \qQ(X)$.
\end{thm}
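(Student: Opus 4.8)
The plan is to reduce all three divisibility statements to a single integrality principle for intersection numbers on a normal projective threefold, exploiting the fact that on a canonical threefold the relevant divisors are Cartier in the appropriate codimension. Concretely, I would isolate the sublemma: if $\mathcal{L}$ is Cartier, $D$ is a Weil divisor that is Cartier in codimension $2$, and $D'$ is an arbitrary Weil divisor, then $\mathcal{L}\cdot D\cdot D'\in\mathbb{Z}$. The point is that ``Cartier in codimension $2$'' is exactly what forces the cycle $D\cdot D'$ to have integer coefficients: at the generic point of any curve $C\subseteq\Supp D'$ the divisor $D$ is locally principal (the locus where it fails to be so is finite, hence of codimension $\ge 3$), so the local intersection multiplicity $m_C$ is an integer and $D\cdot D'=\sum_C m_C[C]$; pairing this integral $1$-cycle with the line bundle $\mathcal{L}$ yields $\sum_C m_C\deg(\mathcal{L}|_C)\in\mathbb{Z}$.

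Next I would record the structural input that makes the sublemma applicable. On a canonical threefold $X$ the divisor $-K_X$ is itself Cartier in codimension $2$: at the generic point of a one–dimensional component of $\Sing X$ the transverse slice is a two–dimensional canonical, hence Du Val, hence Gorenstein singularity, so $\omega_X$ is invertible there, and the non-Gorenstein locus is finite. Moreover $r_X(-K_X)$ is Cartier by definition of the Gorenstein index, and $J_AA$ is Cartier in codimension $2$ by definition of $J_A$. As a byproduct, $r_X(-K_X)^3=\bigl(r_X(-K_X)\bigr)\cdot(-K_X)\cdot(-K_X)\in\mathbb{Z}$ by the sublemma, so the divisibility in (2) is a genuine statement about integers.

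With these in hand the three parts are short. For (1), writing $q=\qQ(X)$ and using $-K_X\equiv qA$ gives $(-K_X)\cdot A^2=(-K_X)^3/q^2$, so
\[
J_Ar_X\frac{(-K_X)^3}{q^2}=\bigl(r_X(-K_X)\bigr)\cdot\bigl(J_AA\bigr)\cdot A,
\]
which is an integer by the sublemma (with $\mathcal{L}=r_X(-K_X)$, $D=J_AA$, $D'=A$), and positive since $-K_X$ is ample. For (2), choosing $B$ with $-K_X\equiv\qW(X)B$ gives $(-K_X)^2\cdot B=(-K_X)^3/\qW(X)$, whence $r_X(-K_X)^3/\qW(X)=\bigl(r_X(-K_X)\bigr)\cdot(-K_X)\cdot B\in\mathbb{Z}$ by the sublemma, now using that $-K_X$ is Cartier in codimension $2$. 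For (3), when $\qQ(X)=\qW(X)=q$ I may take $A$ with $-K_X\sim qA$, so that $qA\sim -K_X$ is Cartier in codimension $2$; since $\{m\in\mathbb{Z}:mA\text{ is Cartier in codimension }2\}$ is the preimage of a subgroup under $m\mapsto mA$ and hence equals $J_A\mathbb{Z}$, the divisibility $J_A\mid q$ follows.

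The main obstacle I anticipate is justifying the sublemma rigorously: one must check that the refined ($\mathbb{Q}$-Cartier) intersection number $\mathcal{L}\cdot D\cdot D'$ genuinely coincides with the degree of $\mathcal{L}$ on the integral $1$-cycle $D\cdot D'$, i.e.\ that the fractional corrections of Mumford-type intersection theory are absent precisely because $D$ is locally principal away from finitely many points. A secondary point needing care is the passage from numerical to linear equivalence in (3): one should verify that for a canonical Fano threefold of Picard number one the class $A$ with $-K_X\equiv qA$ may be taken with $-K_X\sim qA$ once $\qQ(X)=\qW(X)$, so that ``Cartier in codimension $2$'' — a property detected in the finite local class groups of the transverse Du Val singularities, and thus sensitive only to linear equivalence — can legitimately be transported from $-K_X$ to $qA$.
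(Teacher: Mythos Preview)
The paper does not supply a proof of this statement: Theorem~2.5 is quoted from \cite{jll25}*{Theorem 4.2} without argument, so there is nothing in the present paper to compare your proposal against.

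On its own merits your outline is correct and is the standard route. The integrality sublemma (Cartier $\mathcal{L}$, Cartier-in-codimension-$2$ $D$, arbitrary Weil $D'$ $\Rightarrow$ $\mathcal{L}\cdot D\cdot D'\in\mathbb{Z}$) holds for the reason you give: restricting $D$ to each prime component of $D'$ produces a Cartier divisor away from finitely many points, hence an integral $1$-cycle, and pairing with the line bundle $\mathcal{L}$ is then an integer. The observation that $-K_X$ is Cartier in codimension $2$ on a canonical threefold (transverse slices being Du~Val, hence Gorenstein) is exactly the structural input needed for (2). Parts (1) and (2) then drop out immediately from the displayed identities.

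Your flagged concern about (3) is genuine and worth stating plainly. The hypothesis fixes $A$ with $-K_X\equiv \qQ(X)A$, and when $\qQ(X)=\qW(X)=q$ one only knows that \emph{some} $B\in\Cl(X)$ satisfies $-K_X\sim qB$; an arbitrary $A$ with $-K_X\equiv qA$ differs from $B$ by a torsion class $T$, and there is no a~priori reason that $qT$ is Cartier in codimension $2$, so one cannot directly transport the property from $qB$ to $qA$. In the applications made in this paper (Section~3) one is free to choose $A$, and taking $A=B$ makes your argument for (3) go through verbatim; but for the statement exactly as written you would need either to restrict to such a choice of $A$ or to add an argument controlling the image of torsion classes in the local class groups along $\Sing(X)$.
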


As is well known, the terminal singularities lie in codimension three. Hence, the singularities in codimension two are canonical or worse. So it is useful to consider the curves lying in $\Sing(X)$ for a canonical threefold $X$.
\begin{defn}
    Let $X$ be a canonical threefold. Given an irreducible curve $C\subset \Sing(X)$, we say that $C\subset X$ is \textit{of type} $\mathsf{A}_n(\text{resp.} \  \mathsf{D}_m,\ \mathsf{E}_k)$ if at a general point of $C$, $X$ is analytically  isomorphic to $\mathbb{A}^1\times S_C$ where $S_C$ is a Du Val singularity of type $\mathsf{A}_n\ (\text{resp.} \ \mathsf{D}_m,\ \mathsf{E}_k)$, where $n\ge 1\ (\text{resp.} \ m\ge 4,\ 6\le k\le 8)$. See \cite{jaf92}*{Theorem 1.1} for the types of Du Val singularities.

    Let $e_C$ be 1 plus the number of exceptional curves on the minimal resolution of $S_C$. Let $g_C$ be the order of the local fundamental group of $S_C$ (see \cite{llm19}*{Example 5.2}). Let $j_C$ be the order of the Weil divisor class group of $S_C$ (see \cite{lip69}*{Section 24}). These invariants are listed as follows.
    \begin{table}[!ht]
        \centering
        \caption{}
        \label{type}
        \begin{tabular}{c|ccccc}
            Type of $C$ & $\mathsf{A}_n$ & $\mathsf{D}_m$ & $\mathsf{E}_6$ & $\mathsf{E}_7$ & $\mathsf{E}_8$ \\
            \hline
            $e_C$ & $n+1$ & $m+1$ & $7$ & $8$ & $9$ \\
            $g_C$ & $n+1$ & $4m-8$ & $24$ & $48$ & $120$ \\
            $j_C$ & $n+1$ & $4$ & $3$ & $2$ & $1$ \\
        \end{tabular}
    \end{table}
\end{defn}

For a Weil divisor $D$ on $X$ and a general point $p\in C$, we see that locally $D\sim \mathbb{A}^1\times D_0$ around $p=(0,p_0)\in \mathbb{A}^1\times S_C$, where $D_0$ is a Weil divisor on $S_C$. Then we define
$$c_C(D):=c_{p_0}(D_0).$$
For $c_p(D)$, where $p\in S$ is a Du Val singularity and $D$ is a Weil divisor on the surface $S$, see \cite{reid87}.

The other main tool of this paper is the following lemma, which is related to the Kawamata--Miyaoka type inequality \cites{liu19, ijl25, ll25, ll252, jll25}.

\begin{lem}[\cite{jll25}*{Corollary 4.7}]\label{km}
Let X be a $\mathbb{Q}$-factorial canonical Fano threefold with Picard number one. Let A be an ample Weil divisor generating $\Cl(X)/\sim_{\mathbb{Q}}$. Take $J_A$ to be the smallest positive integer such that $J_AA$ is Cartier in codimension 2. Let $J_A=p_1^{a_1}p_2^{a_2}...p_k^{a_k}$ be the prime factorization, where $p_i$ are distinct prime numbers. Then
\begin{align*}
        0 \le \sum_{i=1}^k(p_i^{a_i}-\frac{1}{p_i^{a_i}}) 
        &\le \sum_{C\subset 
        \Sing(X)}(e_C-\frac{1}{g_C})(r_Xc_1(X)\cdot C) \\
        &\le \sum_{C\subset \Sing(X)} (j_C-\frac{1}{j_C})(r_Xc_1(X)\cdot C) \\
        &\le \begin{cases}
        r_Xc_2(X)\cdot c_1(X)-\frac{5}{16}r_Xc_1(X)^3 &\mathrm{if}\ q\le 5\\
        r_Xc_2(X)\cdot c_1(X)-\frac{q^2+2q-4}{4q^2}r_Xc_1(X)^3 &\mathrm{if}\ q\ge 6
        \end{cases}\\
        &\le r_Xc_2(X)\cdot c_1(X)-\frac{r_X}{4}c_1(X)^3.
\end{align*}
As a consequence, we have
    \begin{equation}\label{e3}
        c_1(X)^3\le\begin{cases}
        \frac{16}{5}c_2(X)\cdot c_1(X)  &\mathrm{if} \ q\le 5;\\
        \frac{4q^2}{q^2+2q-4}c_2(X)\cdot c_1(X) &\mathrm{if}\ q\ge 6.
    \end{cases}
    \end{equation}
\end{lem}

\begin{proof}
    The second third inequality comes from the proof of \cite{jll25}*{Corollary 4.7}. The fourth inequality is obtained by combining \cite{jll25}*{Theorem 3.8} and \cite{jll25}*{Theorem 4.6}. The first, the third, and the last inequalities are obvious.
\end{proof}

The following lemmas are useful in this paper.

\begin{lem}\label{bg}
    Suppose that X is a canonical Fano variety with Picard number one. If $\qQ(X)=1$, then the tangent sheaf $\mathcal{T}_X$ is $\alpha$-stable for every movable class $\alpha\in \text{\rm N}_1(X)_{\mathbb{R}}$. Furthermore, let $\dim X=3$ and $\alpha=c_1(X)^2$. By $\mathbb{Q}$-Bogomolov--Gieseker inequality (see \cite{kmm94}*{Lemma 6.5}) we have
    $$c_1(X)^3\le 3c_2(X)\cdot c_1(X).$$
\end{lem}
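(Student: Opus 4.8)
The plan is to deduce the $\alpha$-stability of $\mathcal{T}_X$ from the $\alpha$-semistability of $\mathcal{T}_X$ together with the index hypothesis $\qQ(X)=1$, and then to feed the resulting semistability into the $\mathbb{Q}$-Bogomolov--Gieseker inequality. Write $n=\dim X$, recall that $\rho(X)=1$, and fix an ample Weil divisor $A$ generating $\Cl(X)/\sim_{\mathbb{Q}}$, so that $-K_X\sim_{\mathbb{Q}}\qQ(X)A=A$ and $\det\mathcal{T}_X=-K_X\sim_{\mathbb{Q}}A$.

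First I would establish that $\mathcal{T}_X$ is $\alpha$-semistable for every movable $\alpha$. Suppose not, and let $\mathcal{F}\subsetneq\mathcal{T}_X$ be the maximal $\alpha$-destabilizing subsheaf, of rank $r$ with $0<r<n$. A standard slope computation on the bracket map $\wedge^2\mathcal{F}\to\mathcal{T}_X/\mathcal{F}$ shows that $\mathcal{F}$ is closed under the Lie bracket, i.e.\ it is a foliation; and since $\mu_{\alpha,\min}(\mathcal{F})=\mu_\alpha(\mathcal{F})>\mu_\alpha(\mathcal{T}_X)>0$, the positivity criterion of Campana--P\u{a}un (the algebraic-integrability package underlying \cites{liu19,jll25}) makes $\mathcal{F}$ algebraically integrable with rationally connected general leaves. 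Writing $\det\mathcal{F}\sim_{\mathbb{Q}}\mu A$ with $\mu\in\mathbb{Z}$, the strict destabilizing inequality $\mu/r>1/n$ forces $\mu\ge 1$, so $\mathcal{F}$ is a Fano foliation whose normal sheaf $\mathcal{N}_{\mathcal{F}}=\mathcal{T}_X/\mathcal{F}$ has $c_1(\mathcal{N}_{\mathcal{F}})=(1-\mu)(-K_X)$. The key point is that the rationally connected leaves cover $X$, so $\det\mathcal{N}_{\mathcal{F}}$ is pseudoeffective on a general leaf; as $(1-\mu)(-K_X)$ is anti-ample once $\mu\ge 2$, we must have $\mu=1$ and $c_1(\mathcal{N}_{\mathcal{F}})\equiv 0$. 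But a covering family of leaves with numerically trivial normal determinant yields a nontrivial fibration-type structure on $X$, incompatible with $\rho(X)=1$. This contradiction gives semistability.

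Granting semistability, stability is then automatic and this is where $\qQ(X)=1$ enters cleanly: if $\mathcal{T}_X$ were semistable but not stable, there would be a saturated subsheaf $\mathcal{F}$ of rank $r$ with $0<r<n$ and $\mu_\alpha(\mathcal{F})=\mu_\alpha(\mathcal{T}_X)$, forcing $\det\mathcal{F}\equiv\tfrac{r}{n}(-K_X)$ and hence $\det\mathcal{F}\sim_{\mathbb{Q}}\tfrac{r}{n}A$; since $0<r<n$ this is not an integral multiple of the primitive class $A$, contradicting $\det\mathcal{F}\in\Cl(X)$. Hence $\mathcal{T}_X$ is $\alpha$-stable. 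Finally, specializing to $n=3$ and $\alpha=c_1(X)^2$ — so that $\alpha$-semistability is slope-semistability with respect to $H=c_1(X)$ — I would apply the $\mathbb{Q}$-Bogomolov--Gieseker inequality \cite{kmm94}*{Lemma 6.5} to the semistable sheaf $\mathcal{T}_X$ of rank $3$ with $c_1(\mathcal{T}_X)=c_1(X)$, obtaining $\bigl(6c_2(X)-2c_1(X)^2\bigr)\cdot c_1(X)\ge 0$, that is, $c_1(X)^3\le 3c_2(X)\cdot c_1(X)$.

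I expect the genuine obstacle to be the semistability step, and within it the positivity of $\det\mathcal{N}_{\mathcal{F}}$ for the covering family of leaves: the leaves may be singular and $X$ is only canonical, so adjunction carries correction terms and one must argue carefully that no Fano-foliation destabilizer survives once $\qQ(X)=1$. The remaining steps are formal — the stability upgrade is a pure integrality observation, and the final bound is a one-line Chern-class computation from the $\mathbb{Q}$-Bogomolov--Gieseker inequality.
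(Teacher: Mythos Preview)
The paper's proof is much shorter and avoids your foliation analysis entirely. It invokes Ou's generic nefness theorem \cite{ou23}*{Theorem~1.4}: on a projective klt variety with nef anticanonical class, every torsion-free quotient of $\mathcal{T}_X$ has pseudoeffective first Chern class. Since $\rho(X)=1$, pseudoeffective coincides with nef, so for any proper saturated $\mathcal{F}\subset\mathcal{T}_X$ one has $c_1(\mathcal{T}_X/\mathcal{F})\equiv sA$ with $s\ge 0$; writing $c_1(\mathcal{F})\equiv(1-s)A$ and using that this class is integral together with $\qQ(X)=1$ forces $s\in\mathbb{Z}_{\ge 0}$. The boundary case $s=0$ is excluded via \cite{ll25}*{Proposition~3.6}, and the paper packages the outcome through \cite{jll25}*{Remark~5.7} as: $-c_1(\mathcal{F})$ is nef for every proper subsheaf. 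Stability with respect to every movable $\alpha$ is then immediate, because $\mu_\alpha(\mathcal{F})\le 0<\mu_\alpha(\mathcal{T}_X)$.

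Your approach, by contrast, tries to obtain semistability from scratch through Campana--P\u{a}un algebraic integrability, and then upgrades to stability by the integrality argument. The upgrade and the final Bogomolov--Gieseker application are correct, and your integrality observation is exactly the mechanism through which $\qQ(X)=1$ enters in both proofs. But the semistability step has two real gaps, which you partly flag. First, the assertion that $\det\mathcal{N}_{\mathcal{F}}$ is pseudoeffective when restricted to a general leaf is not standard and is delicate for singular leaves on a merely canonical $X$; you would need a careful deformation-theoretic argument (generic global generation of the normal sheaf for a covering family) that survives the singularities. Second, your disposal of the $\mu=1$ case (``a fibration-type structure incompatible with $\rho(X)=1$'') is not an argument: an algebraically integrable foliation with numerically trivial normal determinant does not automatically yield a morphism to a positive-dimensional base, and this is precisely the nontrivial input that \cite{ll25}*{Proposition~3.6} supplies. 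In effect you are re-deriving by hand a weak form of Ou's theorem; citing \cite{ou23} directly closes both gaps and collapses the argument to a few lines.
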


\begin{proof}
    This is a direct consequence of \cite{ll25}*{Proposition 3.6} and \cite{ou23}*{Theorem 1.4}. As mentioned in \cite{jll25}*{Remark 5.7}, $-c_1(\mathcal{F})$ is nef for any proper subsheaf $\mathcal{F}$ of $\mathcal{T}_X$. So $\mathcal{T}_X$ is $\alpha$-stable for every movable class $\alpha\in \text{\rm N}_1(X)_{\mathbb{R}}$.
\end{proof}
Chen Jiang and Haidong Liu proved the Riemann--Roch formula for canonical Fano threefolds (\cite{jl25}*{Theorem 4.7}). As a consequence, we have the following useful lemma.

\begin{thm}[\cite{jl25}*{Theorem 4.9}]\label{ic}
    Let $X$ be a canonical Fano threefold and let $f\colon Y\rightarrow X$ be a sequential terminalization. Suppose that $-K_X\sim_{\mathbb{Q}}qA$ for some positive rational number $q$ and some ample Weil divisor $A$. Then for any integer $s$ with $0<s<q$,
    $$-\frac{1}{2}A^2\cdot K_X+\sum_{C\subset \Sing(X)}(-K_X\cdot C)c_C(A)-\sum_{(r,b)\in B_X}\frac{\overline{i_{f^{\lfloor *\rfloor}(A)}b}(r-\overline{i_{f^{\lfloor *\rfloor}(A)}b})}{2r}\in \mathbb{Z},$$
    where $i_{f^{\lfloor *\rfloor}(A)}$ is the local index of $f^{\lfloor *\rfloor }(A)$ at the orbifold point of type $(r,b)\in B_X$.
\end{thm}

\section{Computation of degrees}

Throughout this section, let $X$ be a $\mathbb{Q}$-factorial non-Gorenstein canonical Fano threefold with Picard number one. Let $q:=\qQ(X)$ and $\hat{q}:=\qW(X)$. Denote the ordinary Chern classes by $c_i:=c_i(X)$ for $i=1,2$. Let $A$ be an ample Weil divisor generating $\Cl(X)/\sim_{\mathbb{Q}}$ such that $-K_X\sim_{\mathbb{Q}}qA$. Take $J_A$ to be the smallest positive integer such that $J_AA$ is Cartier in codimension 2, and $J_A=p_1^{a_1}p_2^{a_2}...p_k^{a_k}$ be the prime factorization of $J_A$. This section is devoted to proving the following theorem.

\begin{thm}\label{com}
With the assumption above, suppose that $c_1^3>66$. Then all possibilities of $X$ are listed in Table \ref{0.0}, ordered by the anticanonical degrees and Fano indices.
\end{thm}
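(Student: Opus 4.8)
The plan is to turn the statement into a finite enumeration of the admissible numerical data $(B_X,\,c_1^3,\,q,\,\hat q,\,J_A)$ and then read off Table 2; the engine is the combination of Reid's Riemann--Roch (the two formulas in Theorem 2.3) with the Kawamata--Miyaoka-type inequality (Lemma 2.4) and the divisibility constraints (Theorem 2.5). First I would bound the basket. Since $X$ is non-Gorenstein, $\mathcal{R}_X$ is nonempty and at least one $r_i\ge 2$ contributes $\ge \tfrac32$, so $c_2\cdot c_1\le \tfrac{45}{2}$ by the first Reid formula (2.1). On the other hand $c_1^3>66$ forces $c_2\cdot c_1$ to be large: for $q\le 5$, Lemma 2.4 gives $c_2\cdot c_1\ge \tfrac{5}{16}c_1^3>\tfrac{5}{16}\cdot 66$, and for $q\ge 6$ the analogous lower bound $\tfrac{q^2+2q-4}{4q^2}c_1^3$ applies. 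In either case $\sum_{r_i}(r_i-\tfrac{1}{r_i})=24-c_2\cdot c_1$ is bounded above by an explicit constant, so only finitely many baskets survive, and each one pins down $c_2\cdot c_1$ and $r_X=\lcm\{r_i\}$ exactly.

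Second, for each surviving basket I would use the second Reid formula (2.2): writing $c_1^3=2h^0(X,-K_X)-6+\sum_i\frac{b_i(r_i-b_i)}{r_i}$ with $h^0\in\mathbb{Z}_{\ge 0}$, the admissible values of $c_1^3$ form an arithmetic progression of step $2$, so only finitely many satisfy $66<c_1^3\le \tfrac{16}{5}c_2\cdot c_1$ (resp. $\le \tfrac{4q^2}{q^2+2q-4}c_2\cdot c_1$). Next I would bound $q$. Theorem 2.5(1) says $J_A r_X c_1^3/q^2$ is a positive integer, whence $q^2\le J_A r_X c_1^3$; since each admissible $c_1^3$ lies close to the Kawamata--Miyaoka ceiling, the bound on $\sum_i(p_i^{a_i}-\tfrac{1}{p_i^{a_i}})$ in Lemma 2.8 forces $J_A$ to be small (frequently $J_A\in\{1,2\}$). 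With $r_X$ already fixed, this makes $q$ range over a short explicit list for each basket.

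Third, for each admissible $(B_X,c_1^3,q,J_A)$ I would determine $\hat q=\qW(X)$: Theorem 2.5(2) gives $\hat q\mid r_X c_1^3$ and one always has $\hat q\le q$, while Theorem 2.5(3) gives $J_A\mid q$ when $q=\hat q$. The gap between $q$ and $\hat q$ is governed by torsion in $\Cl(X)$, and here Lemma 2.9 is decisive: in the range $q\ge 7$ where it applies, an $s$-torsion element with $s\ge 2$ would produce a threefold $X'$ with $(-K_{X'})^3\ge s\,c_1^3>132$, contradicting the a priori ceiling $c_1^3<90$ coming from Step 1; so torsion is severely limited. Finally I would prune the remaining tuples against the bound on singular curves (Lemma 2.7) and, where a numerically trivial but not linearly trivial class is forced, against the sequential terminalization identity (Lemma 2.11), and then collect the survivors into Table 2, ordered by $c_1^3$ and Fano index.

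The main obstacle will be the large-$q$ regime: the conclusion already exhibits $q$ up to $40$, where $\qQ(X)>\qW(X)$ can occur and the coarse inequalities no longer separate the cases on their own. Handling these requires the simultaneous use of the divisibility $q^2\mid J_A r_X c_1^3$, the $J_A$-bound of Lemma 2.8, and the torsion reduction of Lemma 2.9, together with the careful bookkeeping needed to guarantee that the enumerations of baskets and of $c_1^3$-values are genuinely exhaustive. The delicate part is not discarding impossible data but confirming that each numerically surviving tuple is internally consistent across all of these constraints, which is where most of the work concentrates.
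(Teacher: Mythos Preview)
Your plan tracks the paper's proof closely: case-split on $q$, bound $\mathcal{R}_X$ via (2.1)+(2.3), list admissible $c_1^3$ via (2.2), then filter by the divisibilities in Theorem~2.5 and the $J_A$-bound of Lemma~2.8. Two ingredients are missing, and without them the enumeration will not close down to exactly Table~2.

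First, the $q=\hat q=1$ cases. When $q=1$ you have $J_A=1$, so Theorem~2.5 and Lemma~2.8 impose nothing beyond $r_Xc_1^3\in\mathbb{Z}$; your filters leave the entries $(\mathcal{R}_X,r_Xc_1^3)\in\{(\{2\},137),(\{2\},141),(\{3\},200),(\{2,2\},134)\}$ alive. The paper kills these with Lemma~2.10 (the stability of $\mathcal{T}_X$ when $\qQ=1$ gives $c_1^3\le 3c_2c_1$), which you do not invoke. Neither Lemma~2.7 nor Lemma~2.11 helps here.

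Second, torsion reduction for small $q$. Lemma~2.9 requires $\qQ(X)\ge 7$, so it says nothing when $q\le 6$ and $q\neq \hat q$. In 3.1.1 and 3.2.1 the paper instead takes the index-one cyclic cover $X'\to X$ attached to the torsion class and uses the Jiang--Zou bound $(-K_{X'})^3\le 324$ for canonical weak Fano threefolds to get a contradiction (e.g.\ the $q/\hat q=5$ row in 3.1.1, and all $\hat q=1$ rows in 3.2.1). Your plan has no mechanism in this regime. Relatedly, when you do apply Lemma~2.9 for $q\ge 7$, the resulting $X'$ need not be non-Gorenstein, so your ``ceiling $c_1^3<90$ from Step~1'' does not apply to $X'$; the paper contradicts $(-K_{X'})^3>132$ against \cite{jll25}*{Theorem~1.1} ($c_1^3\le 72$) instead.

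Finally, Lemmas~2.7 and~2.11 are not used in the proof of Theorem~3.1; they belong to Section~4, where the surviving rows of Table~2 are further reduced. Including them here would over-prune relative to what the theorem asserts.
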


\begin{table}[H]
    \centering
    \caption{}
    \label{0.0}
    \begin{tabular}{llllll}
        $c_1^3$ & $B_X$ & $r_X$ & $c_2c_1$ & $q=\hat{q}$ & $J_A$ \\ 
        \hline
        336/5 & \{(5,2)\} & 5 & 96/5 & 84 & 21 \\ 
        200/3 & \{(3,1)\} & 3 & 64/3 & 40 & 8 \\ 
        200/3 & \{(3,1)\} & 3 & 64/3 & 20 & 20,10,4,2 \\ 
        200/3 & \{(3,1)\} & 3 & 64/3 & 10 & 5,2,1 \\ 
        200/3 & \{(3,1)\} & 3 & 64/3 & 5 & 1 \\
        200/3 & \{(3,1)\} & 3 & 64/3 & 4 & 2\\ 
        200/3 & \{(3,1)\} & 3 & 64/3 & 2 & 2,1 \\ 
        133/2 & \{(2,1)\} & 2 & 45/2 & 1 & 1 \\
    \end{tabular}
\end{table}

\begin{proof}
    By \cite{jll25}*{Theorem 1.1}, we assume that $c_1^3<72$. First, we claim that $q=\hat{q}$. Indeed, if $q\neq \hat{q}$, we have $q/\hat{q}\ge 2$ and there exists a torsion divisor. Then by \cite{km98}*{Definition 2.52}, we have an index one cover $f\colon X'\rightarrow X$ where $X'$ is a canonical Fano threefold and $(-K_{X'})^3\ge 2(-K_X)^3\ge \frac{400}{3}$ (See also \cite{reid87}*{(3.5)}), which contradicts \cite{jzz25}*{Theorem 1.1}. The proof is divided into three cases, depending on the value of $q$. Each case has two subcases, depending on the relationship between $q$ and $J_A$.

\subsection{\boldmath{$q\le 5$}}
    After combining (\ref{e1}) and (\ref{e3}) we have
    $$\frac{165}{8}<\frac{5}{16}c_1^3\le c_2c_1=24-\sum_{r_i\in \mathcal{R}_X}(r_i-\frac{1}{r_i}).$$
    Then $\sum_{r_i\in \mathcal{R}_X}(r_i-\frac{1}{r_i})<\frac{27}{8}=3.375$, hence $\mathcal{R}_X$ may be $\{2\}$, $\{3\}$ or $\{2,2\}$. we can therefore list  all possibilities of $r_Xc_1^3$ satisfying (\ref{e2}) and the above inequality in Table \ref{1.0}.

\begin{table}[H]
    \centering
    \caption{}
    \label{1.0}
    \begin{tabular}{lllll}
        $\mathcal{R}_X$ & $r_X$ & $r_Xc_2c_1$ & $r_Xc_1^3$ & $r_Xc_2c_1-\frac{5r_Xc_1^3}{16}$ \\ 
        \hline
        \{2\} & 2 & 45 & 133 & 3.4375\\
        \{2\} & 2 & 45 & 137 & 2.1875 \\
        \{2\} & 2 & 45 & 141 & 0.9375 \\
        \{3\} & 3 & 64 & 200 & 1.5\\
        \{2,2\} & 2 & 42 & 134 & 0.125 \\ 
    \end{tabular}
\end{table}   

\subsubsection{\boldmath{$J_A=q$}}
We have $q\mid r_Xc_1^3$ by Theorem \ref{in}(2). All possibilities from Table \ref{1.0} satisfying this property are listed in Table \ref{1.1}, where we cross out $q$ that do not satisfy Lemma \ref{km}.
\begin{table}[H]
    \centering
    \caption{}
    \label{1.1}
    \begin{tabular}{llllll}
        $\mathcal{R}_X$ & $r_X$ & $r_Xc_2c_1$ & $r_Xc_1^3$ & $q$ & $r_Xc_2c_1-\frac{5r_Xc_1^3}{16}$  \\ 
        \hline
        \{2\} & 2 & 45 & 133 & 1 & 3.4375\\
        \{2\} & 2 & 45 & 137 & 1 & 2.1875 \\
        \{2\} & 2 & 45 & 141 & 1,\cancel{3} & 0.9375 \\
        \{3\} & 3 & 64 & 200 & 1,2,\cancel{4},\cancel{5} & 1.5\\
        \{2,2\} & 2 & 42 & 134 & 1,\cancel{2} & 0.125 \\
    \end{tabular}
\end{table}
By Lemma \ref{bg}, we can also rule out the cases $q=1$ except the case $r_Xc_1^3=133$. Hence, we obtain two possibilities in Table \ref{0.0}.

\subsubsection{\boldmath{$J_A\neq q$}}
By Theorem \ref{in}(1)(3), $\frac{q}{J_A}$ is a square factor of $r_Xc_1^3$. Note that 200 is the only possibility of $r_Xc_1^3$ which has square factors in Table \ref{1.0}. By computation, $(q,J_A)$ must be one of the following possibilities in this case
$$(2,1),\ (4,2),\ (5,1).$$
So we obtain three possibilities in Table \ref{0.0}. 

\subsection{\boldmath{$q\ge 6$}}
Combining (\ref{e1}) and (\ref{e3}) we have 
$$\frac{33}{2}<\frac{1}{4}c_1^3<c_2c_1=24-\sum_{r_i\in \mathcal{R}_X}(r_i-\frac{1}{r_i}).$$
This implies $\sum_{r_i\in \mathcal{R}_X}(r_i-\frac{1}{r_i})<\frac{15}{2}=7.5$, then $\mathcal{R}_X$ is one of the following possibilities
$$\{2\},\{3\},\{4\},\{5\},\{6\},\{7\},\{2,2\},\{2,3\},\{2,4\},\{2,5\},\{2,6\},\{3,3\},\{3,4\},$$
$$\{3,5\},\{2,2,2\},\{2,2,3\},\{2,2,4\},\{2,3,3\},\{2,2,2,2\},\{2,2,2,3\}.$$
We can list in Table \ref{2.0} all possibilities of $r_Xc_1^3$ satisfying (\ref{e2}) and $66< c_1^3< min\{72,4c_2c_1\}$.
\begin{table}[H]
    \centering
    \caption{}
    \label{2.0}
    \begin{tabular}{lllll}
        $\mathcal{R}_X$ & $r_X$ & $r_Xc_2c_1$ & $r_Xc_1^3$ & $r_Xc_2c_1-\frac{r_Xc_1^3}{4}$  \\ 
        \hline
        \{2\} & 2 & 45 & 133,137,141 & $\le 11.75$ \\
        \{3\} & 3 & 64 & 200,206,212 & $\le 14$ \\
        \{4\} & 4 & 81 & 267,275,283 & $\le 14.25$ \\
        \{5\} & 5 & 96 & 334,336,344,346,354,356 & $\le 12.5$ \\
        \{6\} & 6 & 109 & 401,413,425 & $\le 8.75$ \\
        \{7\} & 7 & 120 & 468,472,474 & $\le 3$ \\
        \{2,2\} & 2 & 42 & 134,138,142 & $\le 8.5$ \\
        \{2,3\} & 6 & 119 & 403,415,427 & $\le 18.25$ \\
        \{2,4\} & 4 & 75 & 269,277,285 & $\le 7.75$ \\
        \{2,5\} & 10 & 177 & 673,677,693,697 & $\le 8.75$ \\
        \{2,6\} & 6 & 100 & None \\
        \{3,3\} & 3 & 56 & 202,208,214 & $\le 5.5$ \\
        \{3,4\} & 12 & 211 & 809,833 & $\le 8.75$  \\
        \{3,5\} & 15 & 248 & None & \\
        \{2,2,2\} & 2 & 39 & 135,139,143 & $\le 5.25$ \\
        \{2,2,3\} & 6 & 110 & 406,418,430 & $\le 8.5$ \\
        \{2,2,4\} & 4 & 69 & 271 & =1.25 \\
        \{2,3,3\} & 6 & 103 & 407 & =1.25 \\
        \{2,2,2,2\} & 2 & 36 & 136,140 & $\le 2$ \\
        \{2,2,2,3\} & 6 & 101 & 397 & =1.75 \\
    \end{tabular}
\end{table}

\subsubsection{\boldmath{$J_A=q$}} 
$q\mid r_Xc_1^3$ by Theorem \ref{in}(2). Hence we can list in Table \ref{2.1} all possibilities of $q\ge 6$ which satisfy $\sum_{i=1}^k(p_i^{a_i}-\frac{1}{p_i^{a_i}})<r_Xc_2(X)\cdot c_1(X)-\frac{r_X}{4}c_1(X)^3$ in Lemma \ref{km}. 

\begin{table}[H]
    \centering
    \caption{}
    \label{2.1}
    \begin{tabular}{llllll}
        $\mathcal{R}_X$ & $r_X$ & $r_Xc_2c_1$ & $r_Xc_1^3$ & $q$ & $r_xc_2c_1-\frac{r_Xc_1^3}{4}$ \\ 
        \hline
        \{2\} & 2 & 45 & $133=7\times 19$ & 7 & 11.75 \\
        \{3\} & 3 & 64 & $200=2^3\times 5^2$ & 8,10,20,40 & 14 \\
        \{4\} & 4 & 81 & $275=5^2\times 11$ & 11 & 12.25 \\
        \{5\} & 5 & 96 & $336=2^4\times 3\times 7$ & 6,7,8,12,14,21,24,28,42 & 12 \\
        \{5\} & 5 & 96 & $344=2^3\times 43$ & 8 & 10\\
        \{5\} & 5 & 96 & $354=2\times 3\times 59$ & 6 & 7.5 \\
        \{2,3\} & 6 & 119 & $403=13\times 31$ & 13 & 18.25\\
        \{2,3\} & 6 & 119 & $427=7\times 61$ & 7 & 12.25\\
        \{2,2\} & 2 & 42 & $138=2\times 3\times 23$ & 6 & 7.5 \\
        \{2,2,3\} & 6 & 110 & $406=2\times 7\times 29$ & 7,14 & 8.5
    \end{tabular}
\end{table}
But all possibilities in Table \ref{2.1} contradict $\sum_{i=1}^k(p_i^{a_i}-\frac{1}{p_i^{a_i}})\le r_Xc_2(X)\cdot c_1(X)-\frac{r_X(q^2+2q-4)}{4q^2}c_1(X)^3$ in Lemma \ref{km} except the case 
$$\mathcal{R}_X=\{3\},r_Xc_2c_1=64,r_Xc_1^3=200,q=20.$$

\subsubsection{\boldmath{$J_A\neq q$}}
We have
\begin{itemize}
    \item $\frac{q}{J_A}$ is a square factor of $r_Xc_1^3$;
    \item $J_A$ is a factor of $r_Xc_1^3/(\frac{q}{J_A})^2$ by Theorem \ref{in}(1)(3);
    \item $q\ge 6$ by assumption.
\end{itemize}
By the above properties, we can list in Table \ref{2.2} all possibilities for $q$ and $J_A$, where we cross out all cases that do not satisfy $\sum_{i=1}^k(p_i^{a_i}-\frac{1}{p_i^{a_i}})<r_Xc_2(X)\cdot c_1(X)-\frac{r_X}{4}c_1(X)^3$ in Lemma \ref{km}.

\begin{table}[H]
    \centering
    \caption{}
    \label{2.2}
    \begin{tabular}{lllllll}
        $\mathcal{R}_X$ & $r_X$ & $r_Xc_2c_1$ & $r_Xc_1^3$ & $\frac{q}{J_A}$ & $J_A$ & $r_xc_2c_1-\frac{r_Xc_1^3}{4}$ \\ 
        \hline
        \{3\} & 3 & 64 & $200=2^3\times 5^2$ & 2 & 5,10,\cancel{25},\cancel{50} & 14\\
        \{3\} & 3 & 64 & $200=2^3\times 5^2$ & 5 & 2,4,8 & 14\\
        \{3\} & 3 & 64 & $200=2^3\times 5^2$ & 10 & 1,2 & 14\\
        \{3\} & 3 & 64 & $212=2^2\times 53$ & 2 & \cancel{53} & 11\\
        \{4\} & 4 & 81 & $275=5^2\times 11$ & 5 & 11 & 12.25\\
        \{5\} & 5 & 96 & $336=2^4\times 3\times 7$ & 2 & 3,4,6,7,12,14,21,28,42,\cancel{84} & 12\\
        \{5\} & 5 & 96 & $336=2^4\times 3\times 7$ & 4 & 3,7,21 & 12 \\
        \{5\} & 5 & 96 & $344=2^3\times 43$ & 2 & \cancel{43},\cancel{86} & 10\\
        \{5\} & 5 & 96 & $356=2^2\times 89$ & 2 & \cancel{89} & 7\\
        \{6\} & 6 & 109 & $425=5^2\times 17$ & 5 & \cancel{17} & 2.75\\
        \{7\} & 7 & 120 & $468=2^2\times 3^2\times 13$ & 2 & 3,\cancel{9},\cancel{13},\cancel{39},\cancel{117} & 3 \\
        \{7\} & 7 & 120 & $468=2^2\times 3^2\times 13$ & 3 & 2,\cancel{4},\cancel{13},\cancel{26},\cancel{52} & 3 \\
        \{7\} & 7 & 120 & $468=2^2\times 3^2\times 13$ & 6 & 1,\cancel{13} & 3 \\
        \{7\} & 7 & 120 & $472=2^3\times 59$ & 2 & \cancel{59},\cancel{118} & 2 \\
        
    \end{tabular}
\end{table}

\begin{table}[H]
    \centering
    \renewcommand\thetable{7 continue}
    \caption{}
    \label{2.3}
    \begin{tabular}{lllllll}
        $\mathcal{R}_X$ & $r_X$ & $r_Xc_2c_1$ & $r_Xc_1^3$ & $\frac{q}{J_A}$ & $J_A$ & $r_xc_2c_1-\frac{r_Xc_1^3}{4}$ \\ 
        \hline
        \{2,5\} & 10 & 177 & $693=3^2\times 7\times 11$ & 3 & \cancel{7},\cancel{11},\cancel{77} & 3.75 \\
        \{3,3\} & 3 & 56 & $208=2^4\times 13$ & 2 & 4,\cancel{13},\cancel{26},\cancel{52} & 4 \\
        \{3,3\} & 3 & 56 & $208=2^4\times 13$ & 4 & \cancel{13} & 4 \\
        \{3,4\} & 12 & 211 & $833=7^2\times 17$ & 7 & 1,\cancel{17} & 2.75 \\
        \{2,2,2\} & 2 & 39 & $135=3^3\times 5$ & 3 & 3,5,\cancel{15} & 5.25 \\
        \{2,2,2,2\} & 2 & 36 & $136=2^3\times 17$ & 2 & \cancel{17},\cancel{34} & 2 \\
        \{2,2,2,2\} & 2 & 36 & $140=2^2\times 5\times 7$ & 2 & \cancel{5},\cancel{7},\cancel{35} & 1
    \end{tabular}
\end{table}
But all possibilities in Table \ref{2.2} contradict $\sum_{i=1}^k(p_i^{a_i}-\frac{1}{p_i^{a_i}})\le r_Xc_2(X)\cdot c_1(X)-\frac{r_X(q^2+2q-4)}{4q^2}c_1(X)^3$ in Lemma \ref{km} except the cases listed below.
$$r_Xc_1^3=200,q=\hat{q}=40,J_A=8;$$
$$r_Xc_1^3=200,q=\hat{q}=20,J_A=2,4,10;$$
$$r_Xc_1^3=200,q=\hat{q}=10,J_A=1,2,5;$$
$$r_Xc_1^3=336,q=\hat{q}=84,J_A=21.$$
Then we obtain the remaining eight possibilities in Table \ref{0.0}.
\end{proof}

\section{Reducing the cases}
\setcounter{case}{0}
In this section, we eliminate certain cases from Table \ref{0.0} and determine the optimal upper bound for anticanonical degrees. We still use the notation at the beginning of Section 3.

\begin{prop}\label{1}
    The case $c_1^3=336/5$ and the case $c_1^3=200/3, q=40$ in Table \ref{0.0} do not exist.
\end{prop}
\begin{proof}
    First, suppose $X$ satisfies $c_1^3=336/5$, $c_2c_1=96/5$, $q=\hat{q}=84$, $J_A=21$, $B_X=\{(5,2)\}$. By Lemma \ref{km}, we have
    \begin{align*}
        \sum_{C\subset \Sing(X)}(j_C-\frac{1}{j_C})(r_Xc_1(X)\cdot C) &\le \sum_{C\subset \Sing(X)}(e_C-\frac{1}{g_C})(r_Xc_1(X)\cdot C) \\
        &\le r_Xc_2c_1-\frac{84^2+2\times 84-4}{4\times 84^2}r_Xc_1^3 \\
        &= 10+\frac{1}{21}.
    \end{align*}
    By the definition of $J_A$ and $j_C$, we have $J_A\mid \lcm\{j_C\}$. By combining the above inequality and Table \ref{type}, we obtain that there are exactly two curves $C_3$ and $C_7$ in $\Sing(X)$, where $C_i$ is of type $\mathsf{A}_{i-1}$ for $i=3,7$. We also see that $c_1(X)\cdot C_3=c_1(X)\cdot C_7=\frac{1}{5}$. By Theorem \ref{ic} we have
    $$\frac{1}{210}-\frac{1}{5}\times \frac{2}{2\times 3}-\frac{1}{5}\times\frac{a(7-a)}{2\times 7}-\frac{b}{5}=-\frac{13+21a-3a^2+42b}{210}\in \mathbb{Z},$$
    where $a\in \{1,2,3\}$, $b\in \{0,2,3\}$. But this is impossible. Therefore, such an $X$ does not exist.

    Similarly, suppose $X$ satisfies $c_1^3=\frac{200}{3}$, $c_2c_1=64/3$, $q=\hat{q}=40$, $J_A=8$, $B_X=\{(3,1)\}$. By Lemma \ref{km}, we have
    \begin{align*}
        \sum_{C\subset \Sing(X)}(j_C-\frac{1}{j_C})(r_Xc_1(X)\cdot C) &\le \sum_{C\subset \Sing(X)}(e_C-\frac{1}{g_C})(r_Xc_1(X)\cdot C) \\
        &\le r_Xc_2c_1-\frac{40^2+2\times 40-4}{4\times 40^2}r_Xc_1^3 \\
        &= 11.625.
    \end{align*}
    By combining the above inequality and Table \ref{type}, we see that there must be a curve $C_8$ of type $A_7$ lying in $\Sing(X)$ with $c_1(X)\cdot C_8=\frac{1}{3}$. Moreover, $\Sing(X)$ may have curves which are of type $A_1$, $A_2$ or $A_3$. If there exists a curve $C_3$ of type $A_2$ lying in $\Sing(X)$, then $c_{C_3}(A)=0$ since $A$ is Cartier around a general point of $C_3$ by $J_A=8$. $c_1(X)$ has degree $\frac{1}{3}$ or $\frac{2}{3}$ on the curve which is of type $A_1$ and has degree $\frac{1}{3}$ on the curve which is of type $A_3$. By the above computation we note that for $a\in \{1,3\}$, $b\in \{0,1,2\}$, $c\in \{0,1,2\}$ and $d\in \{0,1\}$,
    \begin{align*}
        0>&\frac{1}{48}-\frac{1}{3}\times\frac{a(8-a)}{2\times 8}-\frac{b}{3}\times\frac{1}{2\times 2}-\frac{1}{3}\times \frac{c(4-c)}{2\times 4}-\frac{d}{3} \\
        \ge&\frac{1}{48}-\frac{1}{3}\times \frac{15}{2\times 8}-\frac{2}{3}\times\frac{1}{2\times 2}-\frac{1}{3}\times \frac{4}{2\times 4}-\frac{1}{3}  \\
        =&-\frac{23}{24}  \\
        >& -1.
    \end{align*}
    Hence, we have
    $$\frac{1}{48}-\frac{1}{3}\times\frac{a(8-a)}{2\times 8}-\frac{b}{3}\times\frac{1}{2\times 2}-\frac{1}{3}\times \frac{c(4-c)}{2\times 4}-\frac{d}{3}\notin \mathbb{Z},$$
    which contradicts Theorem \ref{ic}. 
\end{proof}

\begin{prop}\label{2}
    The case $c_1^3=200/3$, $q=20$ in Table \ref{0.0} does not exist.
\end{prop}

\begin{proof}
    Suppose $X$ satisfies $c_1^3=\frac{200}{3}$, $c_2c_1=64/3$, $q=\hat{q}=20$, $J_A=8$, $B_X=\{(3,1)\}$. We discuss the following two cases.

\begin{case}
    $\Sing(X)$ has no curve of type $A_1$. We have 
    \begin{align*}
        \sum_{C\subset \Sing(X)}(j_C-\frac{1}{j_C})(r_Xc_1(X)\cdot C) &\le \sum_{C\subset \Sing(X)}(e_C-\frac{1}{g_C})(r_Xc_1(X)\cdot C) \\
        &\le r_Xc_2c_1-\frac{20^2+2\times 20-4}{4\times 20^2}r_Xc_1^3 \\
        &= 9.5
    \end{align*}
    by Lemma \ref{km}. We use $C[a,b,c]$ to represent a curve $C$ of type $A_{a-1}$ where $b$ is the local index of $A$ at a general point $p\in C$ ($b=0$ for the case $A$ is Cartier at p) and $c=c_1(X)\cdot C$. Similar to the proof of Proposition \ref{1}, we can list all possibilities for curves lying in $\Sing(X)$ as follows. We omit the curves such that b=0 since they do not affect the computation of Theorem \ref{ic}. If there are two curves $C[a,b,c_1]$ and $C[a,b,c_2]$ in $\Sing(X)$, we regard them as one curve $C[a,b,c_1+c_2]$ for the same reason.  
    \begin{itemize}
        \item $J_A=20$: $\{C[4,1,\frac{1}{3}],C[5,1,\frac{1}{3}]\};\ \{C[4,1,\frac{1}{3}],C[5,2,\frac{1}{3}]\};$
        \item $J_A=10$: $\{C[4,2,\frac{1}{3}],C[5,1,\frac{1}{3}]\}; \ \{C[4,2,\frac{1}{3}],C[5,2,\frac{1}{3}]\};$
        \item $J_A=4$: $\{C[8,2,\frac{1}{3}]\};\ \{C[4,1,\frac{1}{3}]\};\ \{C[4,1,\frac{2}{3}]\};\ \{C[4,1,\frac{1}{3}],C[4,2,\frac{1}{3}]\};$
        \item $J_A=2$: $\{C[8,4,\frac{1}{3}]\};\ \{C[6,3,\frac{1}{3}]\};\ \{C[4,2,\frac{1}{3}]\};\ \{C[4,2,\frac{2}{3}]\}.$
    \end{itemize}
    By Theorem \ref{ic} we have
    \begin{equation}\label{4}
        \frac{1}{12}-\sum_{C[a,b,c]\in \Sing(X)}c\times \frac{b(a-b)}{2\times a}-\frac{d}{3}\in \mathbb{Z}
    \end{equation}
    for some $d\in \{0,1\}$. But every $C[a,b,c]$ listed above satisfies $\frac{1}{12}<c\times \frac{b(a-b)}{2\times a}\le \frac{1}{3}$. Also note that every case of $\Sing(X)$ listed above contains at most two curves and at least one curve. Hence, 
    \begin{align*}
        0>&\frac{1}{12}-\sum_{C[a,b,c]\in \Sing(X)}c\times \frac{b(a-b)}{2\times a}-\frac{d}{3} \\
        \ge &\frac{1}{12} - 2\times \frac{1}{3} -\frac{1}{3} \\
        >&-1.
    \end{align*}
    (\ref{4}) is impossible. There is a contradiction.
\end{case}

\begin{case}
    There exists a curve $C_2$ of type $A_1$ lying in $\Sing(X)$. By \cite{kaw15}*{Proposition 2.4}, there exists a crepant blow-up $f\colon Y\rightarrow X$ such that $f$ has exactly one exceptional prime divisor $E$ and $f(E)=C_2$. Since $j_{C_2}=2$, $B:=f^*(2A)$ is a well-defined Weil divisor in $Y$. Since $-K_X\sim 20A$ in $X$, we have $-K_Y=f^*(-K_X)\sim 10B$ in $Y$.

    Now we run a $K$-MMP on $Y$. Since $\rho(Y)=2$, we have following diagram
\[\begin{tikzcd}
	Y & {Y'} \\
	X & {X'}
	\arrow["g", dashed, from=1-1, to=1-2]
	\arrow["f"', from=1-1, to=2-1]
	\arrow["{f'}", from=1-2, to=2-2]
\end{tikzcd}\]
where $g$ is an isomorphism or the composition of finitely many flips, and $f'$ is a divisorial contraction or a Mori fibre space. Since the $K$-MMP of $Y$ ends up with a Mori fibre space, we have that either $X'$ is a $\mathbb{Q}$-factorial canonical Fano threefold with Picard number one or $X'$ is of dimension 1 or 2.
\begin{subcase}
    $f'$ is a Mori fibre space and $X'$ is of dimension 1 or 2. Let $B'$ be the strict transform of $B$ in $Y'$. Then the general fibre $F$ of $f'$ is a canonical del Pezzo surface or $\mathbb{P}^1$, and we have $-K_F=-K_{Y'}|_F\sim 10B'|_F$, where $B'$ is the strict transform of $B$ in $Y'$. Hence $\qW(F)\ge 10$, which contradicts \cite{wan24}*{Proposition 3.3}.
\end{subcase}

\begin{subcase}
    $f'$ is a divisorial contraction and $X'$ is a $\mathbb{Q}$-factorial canonical Fano threefold with Picard number one. Let $B''$ be the strict transform of $B$ in $X'$. We have $-K_{X'}\sim 10B''$, then $10\mid \qW(X')$. Note that in this case, we have
    $$-K_{Y'}+aE'=f'^*(-K_{X'})$$
    for prime divisor $E'$ and $a\in \mathbb{R}_{>0}$. Hence, we have 
    $$-K_{X'}^3=-K_{Y'}^3-a^2K_{Y'}\cdot E'^2 > -K_{Y'}^3=-K_X^3=\frac{200}{3},$$
    where the inequality is due to $K_{Y'}\cdot E'^2<0$ by Hodge index theorem. Therefore, $X'$ is Gorenstein by Theorem \ref{com} and Proposition \ref{1}. Then by \cite{jll25}*{Theorem 1.1} or \cite{pr05}*{Theorem 1.5}, $c_1(X')^3$ must be one of 68, 70, 72. By Theorem \ref{in} we have 
    $$10\mid \qW(X')\mid c_1(X')^3,$$
    which is possible only if $c_1(X')^3=70$. Note that $\qQ(X')\neq 70$ by \cite{jl25}*{Theorem 1.1}. Therefore we have $\qW(X')=\qQ(X')=10$. By Theorem \ref{in}(1), we have $J_{B''}=10$, where $J_{B''}$ is the smallest positive integer such that $J_{B''}B''$ is Cartier in codimension 2. But this implies
    $$2-\frac{1}{2}+5-\frac{1}{5}=6.3>3.7=24-\frac{10^2=2\times 10-4}{4\times 10^2}\times 70,$$
    which contradicts Lemma \ref{km}.    \qedhere
\end{subcase}
\end{case}
\end{proof}

\begin{rem}
    By using Theorem \ref{ic} and tedious computation as in Proposition \ref{1} and Proposition \ref{2}, we can also rule out the case that $c_1^3=200/3$, $q=10$, $J_A=5$.
\end{rem}

\begin{prop}\label{3}
    The case $c_1^3=200/3$, $q=5$ in Table \ref{0.0} does not exist.
\end{prop}

\begin{proof}
    Suppose $X$ satisfies $c_1^3=\frac{200}{3}$, $c_2c_1=64/3$, $q=\hat{q}=5$. Let $A$ be an ample Weil divisor generating $\Cl(X)/\sim_{\mathbb{Q}}$ such that $-K_X\sim_{\mathbb{Q}}qA$. Since $X$ does not satisfy $\mathbb{Q}$-Bogomolov-Gieseker inequality, we obtain that $\mathcal{T}_X$ is not $c_1$-semistable. As the argument in the proof of \cite{jll25}*{Theorem 3.8}, let $0=\mathcal{E}_0\subsetneq \mathcal{E}_1\subsetneq ...\subsetneq \mathcal{E}_l=\mathcal{T}_X$ be the HN filtration of $\mathcal{T}_X$, where $2\le l\le 3$. Denote by $r_i$ the rank of $\mathcal{F}_i:=(\mathcal{E}_i/\mathcal{E}_{i-1})^{**}$ and by $q_i$ the unique positive integer such that $c_1(\mathcal{F}_i)\equiv q_iA$. Then we have
    $$\sum_{i=1}^l r_i=3,\ \sum_{i=1}^l q_i=5,\ \frac{q_1}{r_1}>\frac{q_2}{r_2}>...>\frac{q_l}{r_l}>0.$$
    The only possibility is 
    $$l=2,\ q_1=2,\ r_1=1,\ q_2=3,\ r_2=2.$$ 
    Then by \cite{jll25}*{Lemma 3.1} and \cite{jll25}*{Theorem 4.6} we have 
    $$6c_2c_1-2c_1^3\ge -\frac{2\times (2-\frac{3}{2})^2}{5^2}c_1^3=-\frac{1}{50}c_1^3.$$
    Therefore,
    $$c_1^3\le \frac{100}{33}c_2c_1=\frac{6400}{99}\approx 64.65,$$
    which contradicts the assumption $c_1^3=200/3$.
\end{proof}

\begin{cor}\label{cor}
    For a non-Gorenstein $\mathbb{Q}$-factorial canonical Fano threefold $X$ with Picard number one, we have $c_1(X)^3\le \frac{200}{3}$. Moreover, if the equality holds, we have $\qQ(X)=\qW(X)\in \{2,4,10\}$. Suppose $c_1(X)^3=200/3$,
    
    (1) if $\qQ(X)=2$, then either $\Sing(X)$ contains exactly one curve $C_2$ which is of type $A_1$ with $-K_X\cdot C_2=\frac{1}{3}$ or $\Sing(X)$ is of codimension 3;

    (2) if $\qQ(X)=4$, then $\Sing(X)$ has exactly one curve $C_2$ which is of type $A_1$ with $-K_X\cdot C_2=\frac{1}{3}$;

    (3) if $\qQ(X)=10$ and if there exists a curve $C\subset \Sing(X)$, then $C$ is of type $A_n$ for $n\in \{1,2,3,4,5\}$.
\end{cor}

\begin{proof}
    By combining Theorem \ref{com}, Proposition \ref{1}, \ref{2} and \ref{3}, we obtain the first part of this corollary. When $\qQ(X)\in \{2,4\}$, we have 
    \begin{align*}
        \sum_{C\subset \Sing(X)}(j_C-\frac{1}{j_C})(r_Xc_1(X)\cdot C) &\le \sum_{C\subset \Sing(X)}(e_C-\frac{1}{g_C})(r_Xc_1(X)\cdot C) \\
        &\le r_Xc_2c_1-\frac{5}{16}r_Xc_1^3 \\
        &= 1.5
    \end{align*}
    by Lemma \ref{km}. Hence, either $\Sing(X)$ has exactly one curve $C_2$ which is of type $A_1$ with $-K_X\cdot C_2=\frac{1}{3}$ or $\Sing(X)$ is of codimension 3. If $\qQ(X)=4$, $J_A=2$, then $\Sing(X)$ should be the former case. 

    When $\qQ(X)=10$, we have
    \begin{align*}
        \sum_{C\subset \Sing(X)}(j_C-\frac{1}{j_C})(r_Xc_1(X)\cdot C) &\le \sum_{C\subset \Sing(X)}(e_C-\frac{1}{g_C})(r_Xc_1(X)\cdot C) \\
        &\le r_Xc_2c_1-\frac{10^2+2\times 10-4}{4\times 10^2}r_Xc_1^3 \\
        &= 6
    \end{align*}
    by Lemma \ref{km}. Hence, every curve $C\subset \Sing(X)$ is of type $A_n$ for $n\in \{1,2,3,4,5\}$.
\end{proof}

\section*{Acknowledgments}
The author would like to thank his supervisor Prof. Haidong Liu for helpful discussions and valuable suggestions. He would like to thank Prof. Chen Jiang and Prof. Jie Liu for helpful discussions. He would like to thank the referee for valuable suggestions and comments.



\begin{thebibliography}{99}

\bibitem{ijl25}
M.~Iwai, C.~Jiang, H.~Liu,
\emph{Miyaoka type inequality for terminal threefolds with nef anti-canonical divisors}, 
Sci. China Math. \textbf{68} (2025), no. 1, 1--18.

\bibitem{jaf92}
D.~B.~Jaffe,
\emph{Local geometry of smooth curves passing through rational double points},
Math. Ann. \textbf{294} (1992), 645--660.

\bibitem{jl25}
C.~Jiang, H.~Liu,
\emph{A canonical Fano threefold has Fano index $\le$ 66},
arXiv:2508.16364v2 (2025).

\bibitem{jll25}
C.~Jiang, H.~Liu, and J.~Liu,
\emph{Optimal upper bound for degrees of canonical Fano threefolds of Picard number one},
arXiv:2501.16632v2 (2025).

\bibitem{jzz25}
C.~Jiang, T.~Zhang, and Y.~Zou,
\emph{A canonical Fano threefold has degree $\le 72$},
arXiv:2510.06856 (2025).

\bibitem{kaw15}
M.~Kawakita,
\emph{The index of a threefold canonical singularity},
Amer. J. Math. \textbf{137} (2015), no. 1, 271--280.

\bibitem{kmm94}
S.~Keel, K.~Matsuki, and J.~McKernan,
\emph{Log abundance theorem for threefolds},
Duke Math. J. \textbf{75} (1994) no. 1, 99--119.

\bibitem{km98}
J.~Koll\'{a}r, S.~Mori, 
\emph{Birational geometry of algebraic varieties},
Cambridge tracts in mathematics, vol. 134, Cambridge University
Press, 1998.

\bibitem{llm19}
A.~Laface, A.~Liendo, J.~Moraga,
\emph{The fundamental group of a log terminal 
-variety},
Eur. J. Math. \textbf{5} (2019), 937--957.

\bibitem{lip69}
J.~Lipman, 
\emph{Rational singularities, with applications to algebraic surfaces and unique factorization},
Publ. Math. IHES \textbf{36} (1969), 195--279.

\bibitem{ll25}
H.~Liu, J.~Liu, 
\emph{Kawamata--Miyaoka type inequality for $\mathbb Q$-Fano varieties with canonical singularities},
J. Reine Angew. Math. \textbf{819} (2025), 265--281.

\bibitem{ll252}
H.~Liu, J.~Liu,
\emph{Kawamata-Miyaoka type inequality for Q-Fano varieties with canonical singularities II: Terminal Q-Fano threefolds}, 
Épijournal Géom. Algébrique \textbf{9} (2025), Art. 12, 21 pp.

\bibitem{liu19}
J.~Liu, 
\emph{Second Chern class of Fano manifolds and anti-canonical geometry}, 
Math. Ann. \textbf{375} (2019), 655--669.

\bibitem{ms84}
D.~R.~Morrison, G.~Stevens, 
\emph{Terminal quotient singularities in dimensions three and four}, 
Proc. Amer. Math. Soc. \textbf{90} (1984), 15--20.

\bibitem{ou23}
W.~Ou,
\emph{On generic nefness of tangent sheaves},
Math. Z. \textbf{304} (2023), no. 58.

\bibitem{pr05}
Yu.~G.~Prokhorov, 
\emph{The degree of Fano threefolds with canonical Gorenstein singularities}, 
Mat. Sb. \textbf{196} (2005), no. 1, 81–-122.

\bibitem{pr07}
Yu.~G.~Prokhorov, 
\emph{The degree of $\mathbb{Q}$-Fano threefolds},
Sb. Math. \textbf{198} (2007), no. 11, 1683--1702.

\bibitem{reid87}
M.~Reid,
\emph{Young person's guide to canonical singularities},
Algebraic geometry, Bowdoin, 1985, 345--414.

\bibitem{wan24}
C.~Wang,
\emph{Fano varieties with conjecturally largest Fano index},
Internat. J. Math. \textbf{35} (2024), no. 12.



\end{thebibliography}
\end{document}